\def\rightharpoonupfill@{%
  \arrowfill@\relbar\relbar\rightharpoonup}
\def\leftharpoondownfill@{%
  \arrowfill@\leftharpoondown\relbar\relbar}
\newcommand{\xrightleftharpoons}[2][]{\mathrel{%
\raise.22ex\hbox{%
$\ext@arrow 3095\rightharpoonupfill@{\phantom{#1}}{#2}$}%
\setbox0=\hbox{%
$\ext@arrow 0359\leftharpoondownfill@{#1}{\phantom{#2}}$}%
\kern-\wd0 \lower.22ex\box0}%
}
\def\rightharrowfill@{%
  \arrowfill@\relbar\relbar\rightarrow}
\def\leftharrowfill@{%
  \arrowfill@\leftarrow\relbar\relbar}
\newcommand{\xrightleftharrow}[2][]{\mathrel{%
\raise.22ex\hbox{%
$\ext@arrow 3095\rightharrowfill@{\phantom{#1}}{#2}$}%
\setbox0=\hbox{%
$\ext@arrow 0359\leftharrowfill@{#1}{\phantom{#2}}$}%
\kern-\wd0 \lower.22ex\box0}%
}
\begin{document}

\title{Ordering of nested square roots of 2 according to Gray code%\thanks{Grants or other notes
%about the article that should go on the front page should be
%placed here. General acknowledgments should be placed at the end of the article.}
}
%\subtitle{Do you have a subtitle?\\ If so, write it here}

%\titlerunning{Short form of title}        % if too long for running head

\author{Pierluigi Vellucci         \and
        Alberto Maria Bersani %etc.
}

%\authorrunning{Short form of author list} % if too long for running head

\institute{P. Vellucci \at
              Dipartimento di Scienze di Base e Applicate per l'Ingegneria, Via Antonio Scarpa n. 16, 00161 Roma \\
              \email{pierluigi.vellucci@sbai.uniroma1.it}           %  \\
%             \emph{Present address:} of F. Author  %  if needed
           \and
           A.M. Bersani \at
              Dipartimento di Ingegneria Meccanica e Aerospaziale, Via Eudossiana n. 18, 00184 Roma \\
              Tel.: +39-06-49766681\\
              Fax: +39-06-49766684 \\
              \email{alberto.bersani@sbai.uniroma1.it}
}

\date{Received: date / Accepted: date}
% The correct dates will be entered by the editor

\maketitle

\begin{abstract}
In this paper we discuss some relations between zeros of Lucas-Lehmer polynomials %(a class of polynomials introduced for the fist time in \cite{Vel})
and Gray code. We study nested square roots of 2 %expressed by (\ref{eq:prop2a})
applying a ``binary code'' that associates bits $0$ and $1$ to $\oplus$ and $\ominus$ signs in the nested form. %that expresses generic zeros of $L_{n}$.
This gives the possibility to obtain an ordering for the zeros of Lucas-Lehmer polynomials, which assume the form of nested square roots of 2.% expressed by (\ref{eq:prop2a}).
\keywords{Nested radicals \and Continued radicals \and Continued roots \and Gray code \and zeros of Chebyshev polynomials}
% \PACS{PACS code1 \and PACS code2 \and more}
\subclass{40A99 \and 11A99 \and 26C10}
\end{abstract}

\section{Introduction.}

Starting from the seminal paper by Ramanujan (\cite{Rama}, \cite{Ber} pp. 108-112), there is a vast literature studying the properties of the so-called continued radicals as, for example: \cite{Her,BordeB,Siz,Joh,Eft2,Ly}. In particular, Sizer \cite{Siz} determined necessary and sufficient conditions on the terms of a continued radical to guarantee convergence, while Johnson and Richmond \cite{Joh} investigated what numbers can be represented by continued radicals and if is there any uniqueness to such representation.
Nested square roots of 2 have been studied, among others, in two works of L.D. Servi \cite{Ser} and  M.A. Nyblom \cite{Nyb}. L.D. Servi tied the evaluation of nested square roots of the form
\begin{equation}
\label{eq:servi}
R(b_k,...,b_1)=\frac{b_k}{2}\sqrt{2+b_{k-1}\sqrt{2+b_{k-2}\sqrt{2+...+b_2\sqrt{2+2\sin\left(\frac{b_1 \pi}{4}\right)}}}}
\end{equation}
where $b_i\in\{-1,0,1\}$ for $i\neq 1$, to expression
\begin{equation}
\label{eq:servi2}
\left(\frac{1}{2}-\frac{b_k}{4}-\frac{b_k b_{k-1}}{8}-...-\frac{b_k b_{k-1} ... b_1}{2^{k+1}}\right)\pi
\end{equation}
to obtain some nested square roots representations of $\pi$. M.A. Nyblom, citing Servi's work, derived a closed-form expression for (\ref{eq:servi}) with a generic $x\geq2$ that replaces $\sin\left(\frac{b_1 \pi}{4}\right)$ in (\ref{eq:servi}).
Efthimiou's work \cite{Eft} proved that the radicals given by
$$a_0 \sqrt{2+a_1\sqrt{2+a_2\sqrt{2+a_3\sqrt{2+...}}}}, \ \ a_i\in\{-1,1\}$$
have limits two times the fixed points of the Chebyshev polynomials $T_{2^n}(x)$, unveiling an interesting relation between these topics. Previous formula is equivalent to (\ref{eq:prop2a}) which will be developed in the next pages.

In \cite{More,More2}, the authors report a relation between the nested square roots of depth $n$ as
\begin{equation}
\label{eq:more}
\pm  \sqrt{2\pm\sqrt{2\pm\sqrt{2\pm...\pm\sqrt{2+2 z}}}}, \ \ \ z\in\mathbb C,
\end{equation}
and the Chebyshev polynomials of degree $2^n$ in a complex variable, generalizing and unifying Servi and Nyblom's formulas, and obtaining the so-called Vi\`{e}te-like formulas (see also \cite{More3}).

In this paper we give an ordering for zeros of Lucas-Lehmer polynomials (which assume the form of nested square roots of 2 expressed by (\ref{eq:prop2a})) using the Gray code which, at the best of our knowledge, is here used to this aim for the first time. Lucas-Lehmer polynomials is a new class of polynomials introduced in \cite{Vel}, created by means of the same iterative formula, $L_n(x) = L_{n-1}^2(x) - 2$, used to build the well-known Lucas-Lehmer sequence, employed in primality tests (see for example \cite{10:10}). Although our results are similar to (\ref{eq:servi}), this approach is different because we study square roots of 2 expressed by (\ref{eq:prop2a}) applying a ``binary code'' that associates bits $0$ and $1$ to $\oplus$ and $\ominus$ signs in the nested form that expresses generic zeros of $L_{n}$.

We will now outline the content of this paper. In Section \ref{sec:prelim} we recall some important properties of Lucas-Lehmer polynomials $L_n(x)$ \cite{Vel} and of the Gray code \cite{19:19,20:20}, useful for Section \ref{sec:gray}. Here we show that the zeros of every $L_n(x)$ follow the same ordering rule of this code, where the signs $\oplus$ and $\ominus$ in the nested radicals are respectively substituted by the digits 0 e 1. In Section \ref{sec:conc} we list some further perspectives and developments of the theory. %Finally, in the appendix, we provide the revisiting of a well-known lemma using the results of the foregoing sections.

\section{Preliminaries.}
\label{sec:prelim}
In this section, we will introduce some useful definitions and results about Lucas-Lehmer polynomials and a particular binary code which is widely used in Informatics.

\subsection{Lucas-Lehmer polynomials.}
We recall some basic facts about Lucas-Lehmer polynomials, which follow the iterative formula
\begin{equation}
L_0(x) = x \quad ; \quad L_n(x) = L_{n-1}^2 - 2 \ \ \ \ \ \forall n \geq 1
\end{equation}
and their zeros, whose proofs can be found in \cite{Vel}.

Assuming $L_{0}=x$ as the initial value, let us construct the first terms of the sequence.
The function $L_{1}(x)=x^{2}-2$ represents a parabola with two zeros $z_{1,2}=\pm \sqrt{2}$ and one minimum point in $(0,-2)$; $L_{2}(x)=(x^{2}-2)^{2}-2= 2 \left(1- 2 x^2 + \frac{x^4}{2} \right)$, shown in Fig. \ref{fig:L_2} contains four zeros: $z_{1\div 4}=\pm \sqrt{2 \pm \sqrt{2}}$. From the derivative of $L_{2}(x)$, $L_{2}'(x)=4x\cdot (x^{2}-2)=4x\cdot L_{1}(x)$ it is possible to determine the critical points of the function: $x_1=0$ (minimum), $x_{2,3}=\pm \sqrt{2}$ (maximum).\
Since $L_{2}(x)= 2 \left(1- 2 x^2 + \frac{x^4}{2} \right) = 2 \cos(2x)+ \textit{o}(x^3)$, for $x\rightarrow 0$ we have $L_{2}(x)\sim 2\cos(2x)$.
\begin{figure}[tb]
\centering
\includegraphics[scale=0.30]{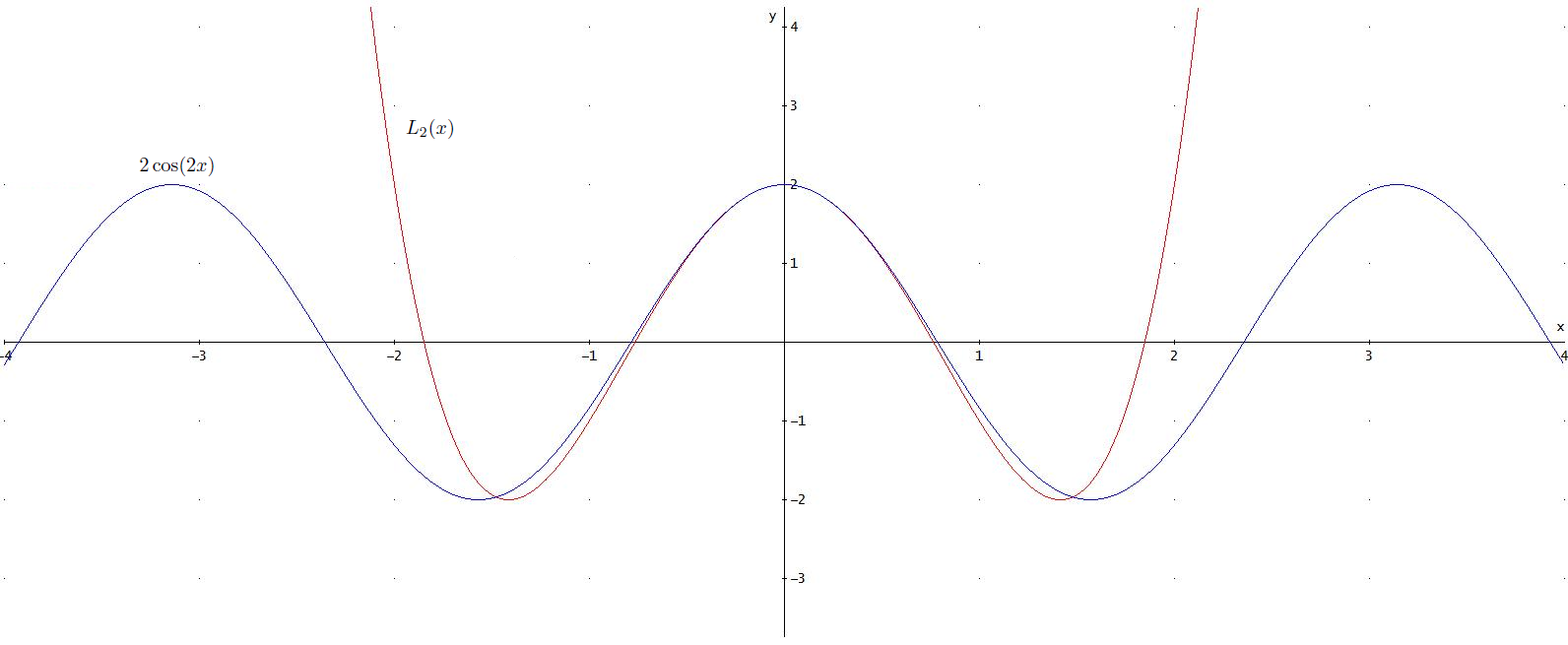}
\caption{comparison between $L_{2}(x)$ and $2\cos(2x)$.}
\label{fig:L_2}
\end{figure}

\begin{figure}[tb]
\centering
\includegraphics[scale=0.30]{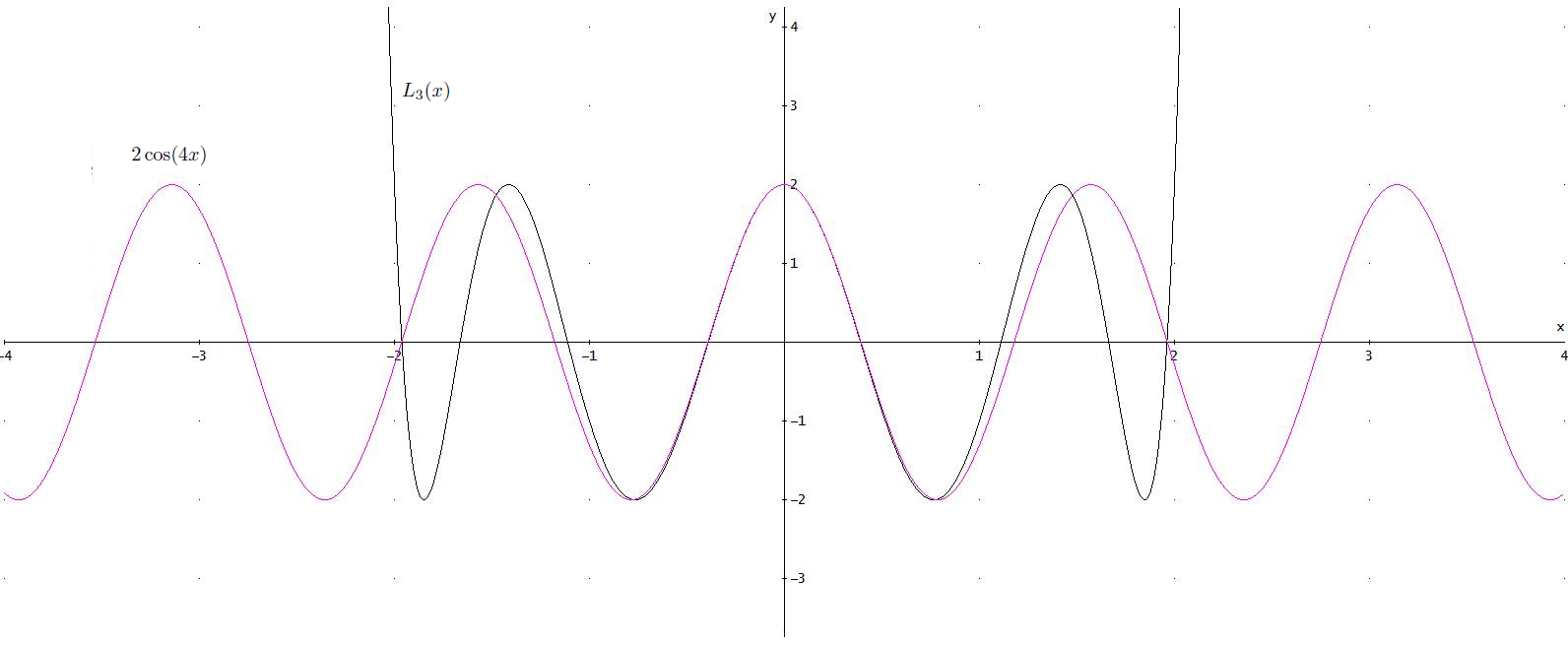}
\caption{comparison between $L_{3}(x)$ and $2\cos(4x)$.}
\label{fig:L_3}
\end{figure}
The zeros of the function $L_{3}(x)=((x^{2}-2)^{2}-2)^{2}-2 = 2 \left(1 - 8x^2 + \textit{o}(x^3)\right)$, whose graph is shown in Fig. \ref{fig:L_3}, are eight: $z_{1\div 8}=\pm \sqrt{2 \pm \sqrt{2 \pm \sqrt{2}}}$. The critical points are: $x_1 = 0$, $x_{2,3}= \pm \sqrt{2}$, $x_{4,5,6,7}=\pm \sqrt{2 \pm \sqrt{2}}$. Besides $L_{3}(x)\sim 2\cos(4x)$ for $x\rightarrow 0$.
\begin{figure}[tb]
\centering
\includegraphics[scale=0.30]{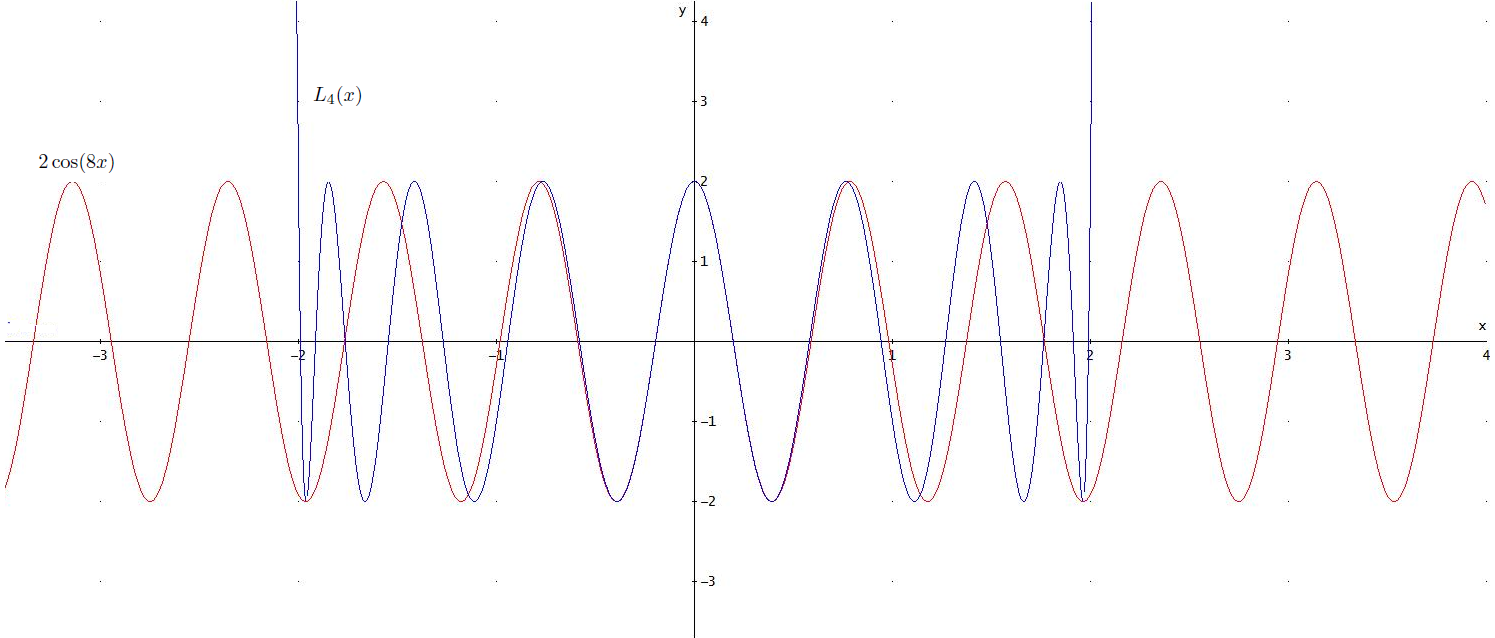}
\caption{comparison between $L_{4}(x)$ and $2\cos(8x)$.}
\label{fig:L_4}
\end{figure}
The zeros of the function (shown in Fig. \ref{fig:L_4}) $L_{4}(x)=(((x^{2}-2)^{2}-2)^{2}-2)^{2}-2$ are sixteen: $z_{1\div 16}=\pm \sqrt{2 \pm \sqrt{2 \pm \sqrt{2 \pm \sqrt{2}}}}$. The critical points follow the same general rule which is possible to guess observing the previous iterations; moreover it results again $L_{4}(x)\sim 2\cos(8x)$ for $x\rightarrow 0$.
It must be noted that: $L_1(\pm \sqrt{2}) = 0$, $L_2(\pm \sqrt{2}) = - 2$, $L_n(\pm \sqrt{2}) = 2 \quad \forall n \ge 3$; $L_0(0) = 0$, $L_1(0) = - 2$, $L_n(0) = 2 \quad \forall n \ge 2$; $L_0(-2) = -2$, $L_n(-2) = 2 \quad \forall n \ge 1$; $L_n(2) = 2 \quad \forall n \ge 0$.

\begin{proposition}
\label{p:1}
At each iteration the zeros of the map $L_n (n \geq 1)$ have the form
\begin{equation}
\label{eq:prop2a}
\pm  \sqrt{2\pm\sqrt{2\pm\sqrt{2\pm\sqrt{2\pm...\pm\sqrt{2}}}}}
\end{equation}
\end{proposition}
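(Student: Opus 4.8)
The plan is to prove this by induction on $n$, showing that the $2^n$ zeros of $L_n$ are exactly the $2^n$ numbers of the form $\pm\sqrt{2\pm\sqrt{2\pm\cdots\pm\sqrt{2}}}$ with $n$ nested radicals. The structural engine is the recurrence $L_n(x)=L_{n-1}^2(x)-2$: a number $z$ is a zero of $L_n$ precisely when $L_{n-1}^2(z)=2$, i.e. when $L_{n-1}(z)=\pm\sqrt{2}$. So the zero-set of $L_n$ is the preimage under $L_{n-1}$ of the two points $\pm\sqrt{2}$, and since $L_{n-1}$ is a polynomial of degree $2^{n-1}$, I expect $2^n$ such preimages, matching the count of sign-patterns in (\ref{eq:prop2a}).

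For the base case I would check $n=1$ directly: $L_1(x)=x^2-2$ has zeros $\pm\sqrt{2}$, which is exactly (\ref{eq:prop2a}) with a single radical. For the inductive step, assume every zero of $L_{n-1}$ has the nested form with $n-1$ radicals. Let $z$ be any zero of $L_n$. Then $L_{n-1}(z)=\pm\sqrt{2}$, so $z$ is a zero of the polynomial $L_{n-1}(x)\mp\sqrt{2}$. The key observation is that solving $L_{n-1}(z)=\pm\sqrt{2}$ is the same as solving $L_{n-2}^2(z)-2=\pm\sqrt{2}$, giving $L_{n-2}(z)=\pm\sqrt{2\pm\sqrt{2}}$; iterating this unwinding of the recurrence $n-1$ times peels off one radical at each stage and terminates at $L_0(z)=z=\pm\sqrt{2\pm\sqrt{2\pm\cdots\pm\sqrt{2}}}$ with $n$ nested radicals. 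Thus every zero has the claimed form, and conversely every admissible sign-pattern yields a genuine zero because each choice of sign keeps the argument under the root nonnegative, so the expression is real and, by reversing the same chain of squarings, satisfies $L_n(z)=0$.

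The main point requiring care is the \emph{counting and reality} argument that guarantees we obtain all $2^n$ distinct zeros and that each nested expression is well-defined. Since $\deg L_n=2^n$, the polynomial has at most $2^n$ real zeros; I would argue (using the observations recorded before the Proposition, namely that $L_n(x)\sim 2\cos(2^{n-1}x)$ near the origin and the bookkeeping of critical points) that all $2^n$ zeros are real and simple, lying in $[-2,2]$. The $2^n$ sign-vectors $(\varepsilon_1,\dots,\varepsilon_n)\in\{+,-\}^n$ produce $2^n$ candidate values, and I would verify they are pairwise distinct, so by the degree bound they exhaust the zero set. The potential obstacle is justifying that every nested radical in (\ref{eq:prop2a}) stays real under arbitrary sign choices: one must check that the innermost expressions never force a negative quantity under a square root. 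This follows because each partial value lies in $[-2,2]$, whence $2\pm(\text{partial value})\in[0,4]\ge 0$, so the outer radical is always defined; I would make this monotonicity/range estimate explicit as the technical heart of the argument.
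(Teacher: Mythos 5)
Your argument is correct and is essentially the standard one: the paper itself states Proposition \ref{p:1} without proof (deferring to \cite{Vel}), and the proof there proceeds exactly by your route, unwinding the recurrence $L_n = L_{n-1}^2 - 2$ so that $L_n(z)=0$ forces $L_{n-1}(z)=\pm\sqrt{2}$, $L_{n-2}(z)=\pm\sqrt{2\pm\sqrt{2}}$, and so on down to $L_0(z)=z$, with the observation that each partial radical lies in $[0,2]$ keeping every step real. Your added remarks on reality via $L_n(x)\sim 2\cos(2^{n-1}x)$ are unnecessary (the unwinding already shows every zero, a priori possibly complex, equals a real nested radical), but they do no harm.
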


Let $M_{n}$ be the set of the critical points and be $Z_{n}$ the set of the zeros of $L_{n}(x)$; we have the following results.
\begin{proposition}
\label{propo:seconda}
For each $n\geq 2$ we have
\begin{equation}
\label{eq:M_L}
M_{n}=M_{n-1}\cup Z_{n-1}=M_{1}\cup \bigcup_{i=1}^{n-1} Z_{i}
\end{equation}
with $card(Z_{n})=2^{n}$.
\end{proposition}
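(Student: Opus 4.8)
The plan is to reduce the whole statement to a single differentiation identity and then iterate it. First I would differentiate the defining recursion $L_n = L_{n-1}^2 - 2$ by the chain rule, obtaining $L_n'(x) = 2\,L_{n-1}(x)\,L_{n-1}'(x)$. A point $x$ is critical for $L_n$ precisely when $L_n'(x) = 0$, and because the right-hand side is a product, this happens if and only if $L_{n-1}(x) = 0$ or $L_{n-1}'(x) = 0$, i.e. if and only if $x \in Z_{n-1}$ or $x \in M_{n-1}$. This yields the first equality $M_n = M_{n-1} \cup Z_{n-1}$ for every $n \geq 2$. The base case $n=1$ is immediate, since $L_1'(x) = 2x$ forces $M_1 = \{0\}$.

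Next I would unfold this recursion. Substituting $M_{n-1} = M_{n-2} \cup Z_{n-2}$ into $M_n = M_{n-1} \cup Z_{n-1}$ and continuing down to $M_1$, a straightforward induction on $n$ gives
$$M_n = M_1 \cup Z_1 \cup Z_2 \cup \cdots \cup Z_{n-1} = M_1 \cup \bigcup_{i=1}^{n-1} Z_i,$$
which is the second equality.

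For the cardinality $card(Z_n) = 2^n$, note that $L_n$ has degree $2^n$, so it suffices to show its roots are real and distinct. Here I would exploit the substitution $x = 2\cos\theta$: an easy induction shows $L_n(2\cos\theta) = 2\cos(2^n\theta)$, so the zeros in $[-2,2]$ correspond to $2^n\theta = \tfrac{\pi}{2} + k\pi$, that is $\theta = \tfrac{(2k+1)\pi}{2^{n+1}}$ for $k = 0, \dots, 2^n - 1$. These produce $2^n$ distinct values of $\theta$ in $(0,\pi)$, hence $2^n$ distinct real zeros, which must exhaust the roots of a degree-$2^n$ polynomial.

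All the set-theoretic steps are routine once the derivative identity is in place; the only point demanding genuine care is the distinctness underlying the cardinality claim. The cosine parametrization is what makes this transparent. One could instead argue distinctness directly from Proposition \ref{p:1}, by showing that the $2^n$ sign choices in the nested radical yield pairwise distinct values, but verifying that no two sign patterns collapse is more delicate than the trigonometric count, so I expect the substitution $x = 2\cos\theta$ to be the cleanest route.
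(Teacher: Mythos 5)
Your argument is correct. The paper states Proposition \ref{propo:seconda} without proof, deferring to \cite{Vel}, but your route is exactly the one the paper's own computations suggest: the factorization $L_n'(x) = 2\,L_{n-1}(x)\,L_{n-1}'(x)$ is displayed explicitly for small $n$ (e.g. $L_2'(x)=4x\,L_1(x)$, $L_3'(x)=8x\,L_1(x)L_2(x)$), and your substitution $x=2\cos\theta$ giving $L_n(2\cos\theta)=2\cos(2^n\theta)$ is equivalent to the Chebyshev identity $L_n(x)=2\,T_{2^{n-1}}\left(\frac{x^2}{2}-1\right)$ recorded in the preliminaries. The only step worth making explicit is that the $2^n$ angles $\theta=\frac{(2k+1)\pi}{2^{n+1}}$, $k=0,\dots,2^n-1$, all lie in $(0,\pi)$, where $\cos$ is strictly decreasing, so they yield $2^n$ pairwise distinct values of $x$; you use this implicitly and it is fine.
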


\begin{corollary}
\label{c:1}
All zeros and critical points of $L_n$ belong to the interval $(-2, 2)$ \footnote{Because of the symmetry of Lucas-Lehmer polynomials, we will study only positive zeros.}.
\end{corollary}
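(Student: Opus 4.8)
The plan is to treat the zeros and the critical points separately, using Proposition \ref{p:1} for the former and Proposition \ref{propo:seconda} for the latter.

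First I would bound the zeros. By Proposition \ref{p:1}, every zero of $L_n$ has the form $\pm\sqrt{2\pm\sqrt{2\pm\cdots\pm\sqrt{2}}}$ with $n$ nested radicals, so its absolute value is largest when all the internal signs are chosen positive. It therefore suffices to bound the purely additive nested radical $a_n=\sqrt{2+\sqrt{2+\cdots+\sqrt{2}}}$, defined recursively by $a_1=\sqrt{2}$ and $a_{k+1}=\sqrt{2+a_k}$. A one-line induction gives $a_k<2$ for all $k$: indeed $a_1=\sqrt{2}<2$, and if $a_k<2$ then $a_{k+1}=\sqrt{2+a_k}<\sqrt{2+2}=2$. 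Since every zero $z$ of $L_n$ satisfies $|z|\le a_n<2$, we conclude $Z_n\subset(-2,2)$. The inequality is strict, which is exactly what is needed for the \emph{open} interval.

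For the critical points I would simply invoke Proposition \ref{propo:seconda}. It gives $M_n=M_1\cup\bigcup_{i=1}^{n-1}Z_i$, where $M_1$ is the set of critical points of $L_1(x)=x^2-2$, namely $M_1=\{0\}\subset(-2,2)$. Combining this with $Z_i\subset(-2,2)$ for every $i$ (just established) yields $M_n\subset(-2,2)$, completing the argument.

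The only point requiring care is the claim that the all-positive sign pattern maximizes the absolute value of a zero; this follows because the nested radical is, level by level, monotone increasing in each of its internal entries, so replacing any $\ominus$ by $\oplus$ can only increase the value. An alternative route, which sidesteps this monotonicity discussion entirely, is the substitution $x=2\cos\theta$: an easy induction on the recursion $L_n=L_{n-1}^2-2$ shows $L_n(2\cos\theta)=2\cos(2^n\theta)$, whence the zeros correspond to $\theta=(2k+1)\pi/2^{n+1}\in(0,\pi)$ and the critical points to $\theta=k\pi/2^n\in(0,\pi)$, all of which map under $x=2\cos\theta$ strictly into $(-2,2)$; the counts $2^n$ and $2^n-1$ match the degrees of $L_n$ and $L_n'$, so no zeros or critical points are missed. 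I expect the monotonicity step (or, in the alternative, verifying the cosine identity) to be the main---though minor---obstacle.
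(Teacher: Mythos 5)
The paper does not actually prove Corollary~\ref{c:1} here: all statements in Section~\ref{sec:prelim} are recalled from \cite{Vel}, so there is no in-paper argument to compare yours against. Your overall strategy --- zeros via the nested-radical form of Proposition~\ref{p:1}, critical points via $M_n=M_1\cup\bigcup_{i=1}^{n-1}Z_i$ with $M_1=\{0\}$ --- is sound, and the critical-point half is complete as written. However, the justification you give for the key step on the zeros is false as stated: it is \emph{not} true that ``replacing any $\ominus$ by $\oplus$ can only increase the value.'' An outer $\ominus$ reverses monotonicity, since $\sqrt{2-u}$ is \emph{decreasing} in $u$; increasing an inner entry therefore decreases the whole expression whenever an odd number of $\ominus$ signs lies outside it. Concretely, $\sqrt{2-\sqrt{2-\sqrt{2}}}\approx 1.11$ while $\sqrt{2-\sqrt{2+\sqrt{2}}}\approx 0.19\ldots$, so flipping the middle sign from $\ominus$ to $\oplus$ lowers the value. (This sign-dependent reversal of monotonicity is precisely why the ordering in Theorem~\ref{theo:graycode} follows the Gray code rather than the lexicographic order of the sign strings.) So the inequality $|z|\le a_n$, while true, is not established by the argument you give.

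The gap is easily repaired, in either of two ways you have essentially already pointed to. The cleanest fix is to drop the comparison with $a_n$ entirely and induct on the depth over \emph{all} sign patterns simultaneously: the innermost value $\sqrt{2}$ lies in $(0,2)$, and if the inner value $v$ satisfies $0<v<2$ then $0<2\pm v<4$, hence $0<\sqrt{2\pm v}<2$; this shows at once that every radical in (\ref{eq:prop2a}) is real and that $Z_n\subset(-2,2)$, after which your appeal to Proposition~\ref{propo:seconda} finishes the critical points. Alternatively, your substitution $L_n(2\cos\theta)=2\cos(2^n\theta)$ is correct, accounts for all $2^n$ zeros and all $2^n-1$ critical points by the degree count, and is in the same spirit as the paper's identity (\ref{eq:propcheby1}) relating $L_n$ to Chebyshev polynomials. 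With either repair the proof is complete.
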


\begin{proposition}
For each $n\geq1$ we have
\begin{equation}
\label{eq:propcheby1}
L_{n}(x)=2\ T_{2^{n-1}}\left(\frac{x^{2}}{2}-1\right)
\end{equation}
where $T_n$ are the Chebyshev polynomials of the first kind.
\end{proposition}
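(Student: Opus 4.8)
The plan is to argue by induction on $n$, exploiting the fact that the quadratic recursion $L_n = L_{n-1}^2 - 2$ mirrors exactly the doubling identity for Chebyshev polynomials of the first kind, namely $T_{2m}(y) = 2\,T_m(y)^2 - 1$. Throughout I would abbreviate $y = \frac{x^2}{2} - 1$, so that the claimed identity reads $L_n(x) = 2\,T_{2^{n-1}}(y)$, and the whole argument reduces to tracking how one squaring step of the recursion corresponds to doubling the Chebyshev index.

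First I would dispose of the base case $n = 1$. Since $T_1(y) = y$, the right-hand side is $2\,T_{2^0}(y) = 2\,T_1(y) = 2\left(\frac{x^2}{2} - 1\right) = x^2 - 2$, which coincides with $L_1(x) = L_0^2 - 2 = x^2 - 2$. This matches, so the formula holds for $n = 1$.

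For the inductive step I would assume $L_{n-1}(x) = 2\,T_{2^{n-2}}(y)$ and compute $L_n$ directly from the recursion: $L_n(x) = L_{n-1}(x)^2 - 2 = \left(2\,T_{2^{n-2}}(y)\right)^2 - 2 = 4\,T_{2^{n-2}}(y)^2 - 2$. The crucial move is to rewrite the squared Chebyshev term using the doubling identity with $m = 2^{n-2}$, in the form $2\,T_m(y)^2 = T_{2m}(y) + 1$, so that $4\,T_{2^{n-2}}(y)^2 = 2\,T_{2^{n-1}}(y) + 2$. Substituting this back gives $L_n(x) = 2\,T_{2^{n-1}}(y) + 2 - 2 = 2\,T_{2^{n-1}}(y)$, which is precisely the claim for index $n$, closing the induction.

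The argument is essentially a single clean induction, so there is no serious obstacle; the only point requiring care is the bookkeeping of the additive and multiplicative constants. Specifically, one must verify that the leading factor $2$ in front of $T_{2^{n-1}}$ interacts correctly with the $-2$ appearing in the recursion, and that the normalization of the doubling formula is the monic-argument version $T_{2m}(y) = 2\,T_m(y)^2 - 1$ rather than a rescaled variant; once these are pinned down, the constant $+2$ produced by the identity cancels the $-2$ of the recursion exactly. It is worth remarking that this result also dovetails with Proposition \ref{propo:seconda}: the fact that $T_{2^{n-1}}$ has $2^{n-1}$ zeros in $(-1,1)$, pulled back through $y = \frac{x^2}{2}-1$, reproduces the count $\mathrm{card}(Z_n) = 2^n$ and confirms that all zeros lie in $(-2,2)$ as in Corollary \ref{c:1}, providing a useful consistency check on the normalization.
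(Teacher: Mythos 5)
Your proof is correct: the base case checks out, and the inductive step correctly pairs the recursion $L_n = L_{n-1}^2 - 2$ with the Chebyshev doubling identity $T_{2m}(y) = 2T_m(y)^2 - 1$ so that the constants cancel exactly. The paper itself gives no proof of this proposition (it is deferred to the cited reference \cite{Vel}), but your argument is the natural one and is complete as written.
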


\begin{proposition}
\label{cor5bis}
The polynomials $L_n(x)$ are orthogonal with respect to the weight function
$$\frac{1}{4\sqrt{4-x^2}}$$
defined on $x\in(-2,2)$.
\end{proposition}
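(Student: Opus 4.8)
The plan is to reduce the claim to the classical orthogonality of Chebyshev polynomials of the first kind, exploiting the identity (\ref{eq:propcheby1}) already established above. Recall that the $T_k$ are orthogonal on $[-1,1]$ with respect to the weight $(1-t^2)^{-1/2}$. The argument $\frac{x^2}{2}-1$ of $T_{2^{n-1}}$ in (\ref{eq:propcheby1}) is precisely the map carrying $(-2,2)$ onto $(-1,1)$ under a trigonometric substitution, so I expect a single change of variables to convert the weighted inner product of the $L_n$ into a standard Chebyshev inner product.

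Concretely, I would first set $x = 2\cos\theta$ with $\theta\in(0,\pi)$. Then $\frac{x^2}{2}-1 = \cos(2\theta)$, and combining $T_{2^{n-1}}(\cos 2\theta) = \cos(2^{n-1}\cdot 2\theta)$ with (\ref{eq:propcheby1}) yields the clean closed form
\begin{equation*}
L_n(2\cos\theta) = 2\cos(2^n\theta).
\end{equation*}
I would also record that $\sqrt{4-x^2} = 2\sin\theta$ and $dx = -2\sin\theta\,d\theta$ on this range, so that the weight $\frac{1}{4\sqrt{4-x^2}}$ combines with $dx$ to cancel the $\sin\theta$ factors entirely.

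Next, for integers $n\neq m$ (both $\geq 1$) I would substitute these expressions into the inner product and watch the normalization collapse:
\begin{equation*}
\int_{-2}^{2} L_n(x)L_m(x)\,\frac{dx}{4\sqrt{4-x^2}} = \int_{0}^{\pi} \cos(2^n\theta)\cos(2^m\theta)\,d\theta.
\end{equation*}
A product-to-sum identity rewrites the integrand as $\tfrac12[\cos((2^n+2^m)\theta)+\cos((2^n-2^m)\theta)]$; since $2^n\pm 2^m$ are nonzero integers whenever $n\neq m$, each cosine integrates to zero over $[0,\pi]$, which is exactly the asserted orthogonality. Running the same computation with $n=m$ gives the norm $\int_0^\pi\cos^2(2^n\theta)\,d\theta = \pi/2$, independent of $n$, confirming that the constant $\tfrac14$ normalizes every $L_n$ to a common value.

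I do not anticipate a genuine analytic obstacle: the computation is routine once the closed form $L_n(2\cos\theta)=2\cos(2^n\theta)$ is in hand, and Corollary \ref{c:1} guarantees the relevant integrals sit on the open interval where the weight is integrable. The only point requiring care is conceptual. Because $\deg L_n = 2^n$, the family $\{L_n\}_{n\geq 1}$ contains polynomials only of degrees $1,2,4,8,\dots$ and is therefore not a complete orthogonal system in the usual ``one polynomial per degree'' sense. I would make explicit that the proposition asserts the pairwise orthogonality of the $L_n$ among themselves, rather than any spanning or completeness property; one may additionally note, by the evenness of both the $L_n\,(n\geq1)$ and the weight, that $L_0(x)=x$ is orthogonal to all of them by parity.
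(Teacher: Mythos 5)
Your proof is correct. The paper itself does not include a proof of this proposition — it is stated among the "basic facts ... whose proofs can be found in \cite{Vel}" — but your route is the natural one that the identity (\ref{eq:propcheby1}) is set up to enable: the substitution $x=2\cos\theta$ gives $L_n(2\cos\theta)=2\cos(2^n\theta)$, the weight and $dx$ cancel the $\sin\theta$ factors, and the product-to-sum identity finishes the computation; your closing remark that the family $\{L_n\}$ skips most degrees (so this is pairwise orthogonality, not a complete system) is a worthwhile clarification that the paper does not make explicit.
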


\subsection{Gray code.}

Given a binary code, its \textbf{order} is the number of bits with which the code is built, while its \textbf{length} is the number of strings that compose it. The celebrated Gray code \cite{19:19,20:20} is a binary code of order $n$ and length $2^{n}$.
\begin{figure}[tb]
\centering
\includegraphics[scale=0.80]{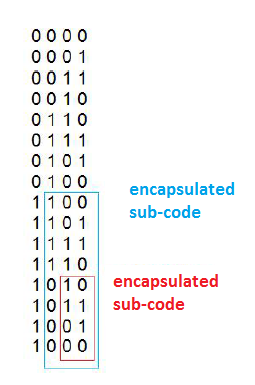}
\caption{Sub-codes for $m=2$, $m=3$.}
\label{fig:gray}
\end{figure}

We briefly recall below how a Gray Code is generated; if the code for $n-1$ bits is formed by binary strings
\begin{align}
\label{eq:n-1_gray}
&g_{n-1,1}\notag \\
&...\notag \\
&g_{n-1,2^{n-1}-1}\notag \\
&g_{n-1,2^{n-1}}
\end{align}
the code for $n$ bits is built from the previous one in the following way:
\begin{align}
\label{eq:n_gray}
&0g_{n-1,1}\notag \\
&...\notag \\
&0g_{n-1,2^{n-1}-1}\notag \\
&0g_{n-1,2^{n-1}}\notag \\
&1g_{n-1,2^{n-1}}\notag \\
&1g_{n-1,2^{n-1}-1}\notag \\
&...\notag \\
&1g_{n-1,1}\notag \\
\end{align}

Just as an example, we have

\noindent
for $n=1$: $g_{1,1}=0 \ ; \ g_{1,2}=1$;

\noindent
for $n=2$: $g_{2,1}=00 \ ; \ g_{2,2}=01 \ ; \ g_{2,3}=11 \ ; \ g_{2,4}=10$

\noindent
for $n=3$: $g_{3,1}=000 \ ; \ g_{3,2}=001 \ ; \ g_{3,3}=011 \ ; \ g_{3,4}=010 \ ; \ g_{3,5}=110 \ ; \ g_{3,6}=111 \ ; \ g_{3,7}=101 \ ; \ g_{3,8}=100$

\noindent
and so on.

\begin{definition}
\label{def:prima}
Let us consider a Gray code of order $n$ and length $2^{n}$. A \emph{sub-code} is a Gray code of order $m<n$ and length $2^{m}$.
\end{definition}
\begin{definition}
\label{def:seconda}
Let us consider a Gray code of order $n$ and length $2^{n}$. An \emph{encapsulated sub-code} is a sub-code built starting from the last string of the Gray code of order $n$ that contains it.
\end{definition}
Figure (\ref{fig:gray}) contains some examples of encapsulated sub-codes inside a Gray code (with order $4$ and length $16$).

\section{Gray code and nested square roots.}
\label{sec:gray}

It is known (Propositions \ref{p:1}, \ref{propo:seconda} and Corollary \ref{c:1}) that $L_{n}$ has $2^{n}$ zeros, symmetric with respect to the origin. Let us consider the signs $\oplus, \ominus$ in the nested form that expresses generic zeros of $L_{n}$, %starting from the first nested radical,
as follows:
\begin{equation}
\label{eq:annidata}
\sqrt{2\pm\underbrace{\sqrt{2\pm\sqrt{2\pm\sqrt{2\pm...\pm\sqrt{2\pm\sqrt{2}}}}}}}
\end{equation}
Obviously the underbrace encloses $n-1$ signs $\oplus$ or $\ominus$, each one placed before each nested radical. Starting from the first nested radical we apply a code (i.e., a system of rules) that associates bits $0$ and $1$ to $\oplus$ and $\ominus$ signs, respectively.

Let us define with $\displaystyle \{\omega(g_{n-1}, j) \}_{j=1, ..., 2^{n-1}}$ the set of all the $2^{n-1}$ nested radicals of the form
\begin{equation}
%\label{eq:annidata}
2\pm\underbrace{\sqrt{2\pm\sqrt{2\pm\sqrt{2\pm...\pm\sqrt{2\pm\sqrt{2}}}}}}_{n-1 \ \ signs}=\omega(g_{n-1, 1 \div 2^{n-1}}) \ ,
\end{equation}
where each element of the set differs from the others for the sequence of $\oplus$ and $\ominus$ signs.
Then:
\begin{equation}
\sqrt{2\pm\underbrace{\sqrt{2\pm\sqrt{2\pm\sqrt{2\pm...\pm\sqrt{2\pm\sqrt{2}}}}}}_{n-1 \ \ signs}}=\sqrt{\omega(g_{n-1, 1 \div 2^{n-1}})}
\end{equation}
where the notation $n-1, 1 \div 2^{n-1}$ means that it is possible obtain $2^{n-1}$ strings formed by $n-1$ bit.
\begin{theorem}
\label{theo:graycode}
The strings with which we code the $2^{n-1}$ positive zeros of $L_{n}$ (sorted in decreasing order) follow the sorting of Gray code. That is, if
\begin{align}
&g_{n-1,1}\notag \\
&...\notag \\
&g_{n-1,2^{n-1}-1}\notag \\
&g_{n-1,2^{n-1}}
\end{align}
is the Gray Code, then
\begin{equation}
\label{eq:theorem_graycode}
\sqrt{\omega(g_{n-1,1})} > ... > \sqrt{\omega(g_{n-1,2^{n-1}-1})} > \sqrt{\omega(g_{n-1,2^{n-1}})}
\end{equation}
\end{theorem}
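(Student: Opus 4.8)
The plan is to argue by induction on $n$, exploiting the recursive structure shared by the nested radicals and by the Gray code construction in (\ref{eq:n-1_gray})--(\ref{eq:n_gray}). The crucial observation is that every positive zero of $L_n$ can be written as $\sqrt{2+sW}$, where $s\in\{+1,-1\}$ records the outermost sign $\oplus/\ominus$ and $W=\sqrt{2\pm\sqrt{2\pm\cdots\pm\sqrt{2}}}$, with $n-2$ inner signs, is itself a positive zero of $L_{n-1}$ by Proposition \ref{p:1}. This sets up a bijection between the $2^{n-1}$ positive zeros of $L_n$ and the pairs $(s,W)$ with $W$ in the set $Z_{n-1}^{+}$ of positive zeros of $L_{n-1}$; under our coding, the first bit of the string for $\sqrt{2+sW}$ is $0$ when $s=+1$ and $1$ when $s=-1$, while the remaining $n-2$ bits are exactly the string coding $W$ as a zero of $L_{n-1}$.

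First I would record the two monotonicity facts that drive everything. By Corollary \ref{c:1} each $W\in Z_{n-1}^{+}$ lies in $(0,2)$, so $2-W>0$ and all expressions are real. On this range, $W\mapsto\sqrt{2+W}$ is strictly increasing and $W\mapsto\sqrt{2-W}$ is strictly decreasing, and moreover $\sqrt{2+W}>\sqrt{2}>\sqrt{2-W}$. The last inequality shows that every zero whose code begins with $0$ (i.e. $s=+1$) exceeds every zero whose code begins with $1$ (i.e. $s=-1$); hence, in decreasing order, the $2^{n-2}$ zeros coded $0\cdots$ come first and the $2^{n-2}$ zeros coded $1\cdots$ come last. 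This reproduces the top-level split of the Gray code construction (\ref{eq:n_gray}), in which the first half of the strings carries a leading $0$ and the second half a leading $1$.

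Next I would treat the two halves separately using the inductive hypothesis, namely that the positive zeros of $L_{n-1}$ sorted in decreasing order have their $(n-2)$-bit codes in the Gray order $g_{n-2,1},\dots,g_{n-2,2^{n-2}}$. In the first half, $\sqrt{2+W}$ is increasing in $W$, so sorting these zeros in decreasing order is the same as sorting the corresponding $W$ in decreasing order; by the hypothesis the tails then appear as $g_{n-2,1},\dots,g_{n-2,2^{n-2}}$, and prepending $0$ yields exactly the first block of (\ref{eq:n_gray}). In the second half, $\sqrt{2-W}$ is decreasing in $W$, so decreasing order of these zeros corresponds to increasing order of $W$, i.e. the reversed list $g_{n-2,2^{n-2}},\dots,g_{n-2,1}$; prepending $1$ yields precisely the second block of (\ref{eq:n_gray}). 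Concatenating the two blocks reconstructs the full $(n-1)$-bit Gray code, completing the induction. The base case $n=2$ is immediate, since $\sqrt{2+\sqrt{2}}>\sqrt{2-\sqrt{2}}$ matches the one-bit code $g_{1,1}=0$, $g_{1,2}=1$.

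The main obstacle, and the step I would check most carefully, is the matching of the order reversal: I must verify that the reversal forced by the strictly decreasing map $W\mapsto\sqrt{2-W}$ coincides exactly with the reversal built into the second half of the Gray code rule (\ref{eq:n_gray}). Getting this alignment right, together with the clean separation of the two halves guaranteed by $\sqrt{2+W}>\sqrt{2-W}$, is what makes the recursion on nested radicals synchronize with the recursion defining the Gray code.
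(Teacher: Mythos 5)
Your proposal is correct and follows essentially the same route as the paper's proof: induction on $n$, using that $W\mapsto\sqrt{2+W}$ preserves and $W\mapsto\sqrt{2-W}$ reverses the inductively ordered list of zeros of $L_{n-1}$, which matches the prepend-$0$ / prepend-$1$-and-reverse construction of the Gray code. The only (harmless) difference is that you separate the two halves by the uniform bound $\sqrt{2+W}>\sqrt{2}>\sqrt{2-W'}$, whereas the paper compares only the two middle elements $\omega(0,g_{n-1,2^{n-1}})>\omega(1,g_{n-1,2^{n-1}})$; both suffice.
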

\begin{proof}
We first prove ($\ref{eq:theorem_graycode}$) for $n=2$; here the Gray Code is reduced to bits $0,1$ (with this order). Indeed we have $\sqrt{\omega(0)}>\sqrt{\omega(1)}$ because:
\begin{equation}
\sqrt{2+\sqrt{2}}>\sqrt{2-\sqrt{2}} \Leftrightarrow 2+\sqrt{2} > 2-\sqrt{2} \Leftrightarrow 2\sqrt{2}>0
\end{equation}
Let us now suppose that ($\ref{eq:theorem_graycode}$) is true for the Gray Code of order $n-1$.
We know that
\begin{equation}
z_{(n)}=\pm\sqrt{2\pm z_{(n-1)}}
\end{equation}
where $z_{(n)}$ and $z_{(n-1)}$ are the generic zeros of $L_{n}$ and $L_{n-1}$. For the symmetry of the zeros we can  consider only positive zeros. Therefore
\begin{equation}
\label{eq:relazionezeri}
z_{(n)}=\sqrt{2\pm z_{(n-1)}}
\end{equation}
But the generic zero of $L_{n-1}$, according to the hypothesis, is precisely one among $\sqrt{\omega(g_{n-1,1})}$,... ,$\sqrt{\omega(g_{n-1,2^{n-1}-1})},\sqrt{\omega(g_{n-1,2^{n-1}})}$; then the generic zero can be indicated with $\sqrt{\omega(g_{n-1,1\div 2^{n-1}})}$, in a more compact form. Then, from ($\ref{eq:relazionezeri}$) we can separate the cases $\oplus$ and $\ominus$, obtaining either
\begin{equation}
z_{(n)}=\underbrace{\sqrt{2 + \sqrt{\omega(g_{n-1,1\div 2^{n-1}})}}}_{\sqrt{\omega(0g_{n-1,1\div 2^{n-1}})}}
\end{equation}
because $\oplus$ corresponds to 0, or
\begin{equation}
z_{(n)}=\underbrace{\sqrt{2 - \sqrt{\omega(g_{n-1,1\div 2^{n-1}})}}}_{\sqrt{\omega(1g_{n-1,1\div 2^{n-1}})}}
\end{equation}
because $\ominus$ corresponds to 1. Thesis follows if we show the following:
\begin{align}
\label{eq:induz_bpiu1_generico}
&\sqrt{\omega(0,g_{n-1,1})}>\sqrt{\omega(0,g_{n-1,2})}>...>\sqrt{\omega(0,g_{n-1,2^{n-1}})}> \notag \\
&>\sqrt{\omega(1,g_{n-1,2^{n-1}})}>...>\sqrt{\omega(1,g_{n-1,1})}
\end{align}
or equivalently
\begin{align}
&\omega(0,g_{n-1,1})>\omega(0,g_{n-1,2})>...>\omega(0,g_{n-1,2^{n-1}})> \notag \\
&>\omega(1,g_{n-1,2^{n-1}})>...>\omega(1,g_{n-1,1})
\end{align}
%in modo da poter dire che si \`{e} di fronte in effetti ad un codice Gray (questa sopra infatti \`{e} il risultato della procedura di costruzione della generica stringa di Gray).
We start proving inequality
\begin{equation}
\label{eq:prima_dis}
\omega(0,g_{n-1,i})>\omega(0,g_{n-1,i+1}) \ \ \ \ \ \ \ \ \ \forall i=1,2,...,2^{n-1}-1
\end{equation}
or
\begin{align}
\label{eq:dim_primo_step-prima_dis}
&2+\sqrt{\omega(g_{n-1,i})}>2+\sqrt{\omega(g_{n-1,i+1})} \Leftrightarrow \notag \\
&\Leftrightarrow  \omega(g_{n-1,i})>\omega(g_{n-1,i+1})
\end{align}
true by virtue of hypothesis ($\ref{eq:theorem_graycode}$). We prove now the inequality
\begin{equation}
\label{eq:secondaseconda_dis}
\omega(1,g_{n-1,i+1})>\omega(1,g_{n-1,i}) \ \ \ \ \ \ \ \ \ \forall i=1,2,...,2^{n-1}-1
\end{equation}
It can be rewritten
\begin{equation}
\label{eq:dim_primo_step-seconda_dis}
2-\sqrt{\omega(g_{n-1,i+1})}>2-\sqrt{\omega(g_{n-1,i})} \Leftrightarrow  \omega(g_{n-1,i})>\omega(g_{n-1,i+1})
\end{equation}
true for assumption.
Finally, the relation
\begin{equation}
\label{eq:terza_dis}
\omega(0,g_{n-1,2^{n-1}})>\omega(1,g_{n-1,2^{n-1}})
\end{equation}
follows naturally from
\begin{align}
\label{eq:dim_primo_step-terza_dis}
&2+\sqrt{\omega(g_{n-1,2^{n-1}})}>2-\sqrt{\omega(g_{n-1,2^{n-1}})} \Rightarrow \notag \\
&\Rightarrow 2\sqrt{\omega(g_{n-1,2^{n-1}})}>0
\end{align}
always true.
\end{proof}

\begin{figure}[tb]
\centering
\includegraphics[scale=0.50]{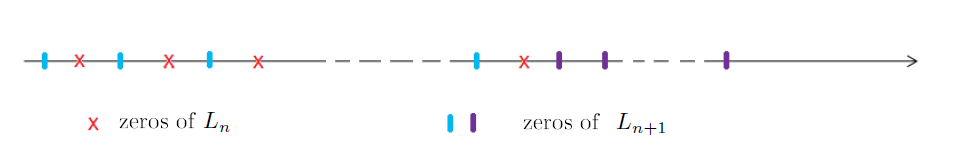}
\caption{Disposition of the zeros of $L_{n}$ and $L_{n+1}$ on the real axis.}
\label{fig:zeriL}
\end{figure}
In Table \ref{tab:1} we give an example of ordering of the zeros of $L_n(x)$ for $n=4$.
\begin{center}
\begin{tabular}{p{0.7cm}p{1.5cm}p{4cm}p{1.5cm}}
$g_{3,j}$&Binary string&\ \ \ \ \ \ \ \ \ \ \ \ \ \ Radicals&Approx.\\ \hline
$g_{3,1}$&000&
$\sqrt{2+\sqrt{2+\sqrt{2+\sqrt{2}}}}$&$1.99\dots$\\ \hline
$g_{3,2}$&001 &
$\sqrt{2+\sqrt{2+\sqrt{2-\sqrt{2}}}}$&$1.91\dots$\\ \hline
$g_{3,3}$&011 &
$\sqrt{2+\sqrt{2-\sqrt{2-\sqrt{2}}}}$&$1.76\dots$\\ \hline
$g_{3,4}$&010 &
$\sqrt{2+\sqrt{2-\sqrt{2+\sqrt{2}}}}$&$1.54\dots$\\ \hline
$g_{3,5}$&110 &
$\sqrt{2-\sqrt{2-\sqrt{2+\sqrt{2}}}}$&$1.26\dots$\\ \hline
$g_{3,6}$&111 &
$\sqrt{2-\sqrt{2-\sqrt{2-\sqrt{2}}}}$&$0.94\dots$\\ \hline
$g_{3,7}$&101 &
$\sqrt{2-\sqrt{2+\sqrt{2-\sqrt{2}}}}$&$0.58\dots$\\ \hline
$g_{3,8}$&100 &
$\sqrt{2-\sqrt{2+\sqrt{2+\sqrt{2}}}}$&$0.19\dots$\\ \hline
\end{tabular}
\captionof{table}{In the Table we consider the $2^3$ positive zeros of $L_4(x)$ and their ordering due to Gray code. It is according to Theorem \ref{theo:graycode}.}
\label{tab:1}
\end{center}

Since, as shown in Proposition \ref{cor5bis}, $\{L_n\}$ is a set of orthogonal polynomials, it follows that between two zeros of $L_n(x)$ there exists one and only one zero of $L_{n+1}(x)$ (see \cite{13:13}).
\begin{theorem}
\label{theo:zeridisposizione}
Let us consider the $2^{n-1}$ zeros of $L_{n}(x)$
\begin{equation}
\sqrt{\omega(g_{n-1,2^{n-1}})}<\sqrt{\omega(g_{n-1,2^{n-1}-1})}< ... <\sqrt{\omega(g_{n-1,1})} \ .
\end{equation}
Then the zeros of $L_{n+1}(x)$ are arranged on the real axis in this way:
\begin{itemize}
  \item {\bf i)} The first zero of $L_{n+1}(x)$ (i.e. $\sqrt{\omega(1,g_{n-1,1})}$) is on the left of the first zero of $L_{n}(x)$: $\sqrt{\omega(1,g_{n-1,1})}<\sqrt{\omega(g_{n-1,2^{n-1}})}$.
  \item {\bf ii)} The $2^{n-1}-1$ zeros of $L_{n+1}(x)$, which can be represented in the form $$\sqrt{\omega(1,g_{n-1,2\div 2^{n-1}})},$$ are arranged one by one in the $2^{n-1}-1$ intervals which have consecutive zeros of $L_{n}(x)$; i.e.: $(\sqrt{\omega(g_{n-1,k})},\ \sqrt{\omega(g_{n-1,k-1})})$.
  \item {\bf iii)} The remaining zeros, expressed as $\sqrt{\omega(0,g_{n-1,1\div 2^{n-1}})}$, are on the right of the last zero of $L_{n}(x)$:$\sqrt{\omega(g_{n-1,1})}$.
\end{itemize}
The above is schematically shown in Figure (\ref{fig:zeriL}).
\end{theorem}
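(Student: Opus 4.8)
The plan is to combine the recursive nested-radical structure already exploited in the proof of Theorem~\ref{theo:graycode} with the interlacing property recalled just before the statement. First I would record the decomposition of the $2^{n}$ positive zeros of $L_{n+1}$ inherited from that proof: prepending the bit $0$ (sign $\oplus$) to a Gray string $g_{n-1,j}$ yields $\sqrt{\omega(0\,g_{n-1,j})}=\sqrt{2+\sqrt{\omega(g_{n-1,j})}}$, while prepending $1$ (sign $\ominus$) yields $\sqrt{\omega(1\,g_{n-1,j})}=\sqrt{2-\sqrt{\omega(g_{n-1,j})}}$. Since $t\mapsto\sqrt{2+t}$ is increasing and $t\mapsto\sqrt{2-t}$ is decreasing, and $\sqrt{\omega(g_{n-1,1})}>\dots>\sqrt{\omega(g_{n-1,2^{n-1}})}>0$, the smallest element of the ``$0$-family'' already exceeds $\sqrt{2}$, while the largest element of the ``$1$-family'' is below $\sqrt{2}$. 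Hence, consistently with Theorem~\ref{theo:graycode}, the $1$-family $\{\sqrt{\omega(1\,g_{n-1,j})}\}_{j}$ is exactly the set of the $2^{n-1}$ smallest zeros of $L_{n+1}$ and the $0$-family the set of the $2^{n-1}$ largest. This separates the candidates for (i)--(ii) from those for (iii).

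For (i) I would argue directly from the closed form \eqref{eq:propcheby1}: the positive zeros of $L_n$ are $2\cos\frac{(2k-1)\pi}{2^{n+1}}$, so the largest and smallest are $\sqrt{\omega(g_{n-1,1})}=2\cos\theta$ and $\sqrt{\omega(g_{n-1,2^{n-1}})}=2\sin\theta$ with $\theta=\pi/2^{n+1}$. The inequality $\sqrt{\omega(1\,g_{n-1,1})}<\sqrt{\omega(g_{n-1,2^{n-1}})}$ then reduces to $2-2\cos\theta<4\sin^{2}\theta$, i.e. $\cos\theta>\cos 2\theta$, which holds for every $\theta\in(0,\pi/2)$. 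This is the only genuinely quantitative inequality, and it places the smallest zero of $L_{n+1}$ to the left of the smallest zero of $L_n$.

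The remaining two items I would obtain by counting, feeding in the interlacing fact stated just before the theorem (exactly one zero of $L_{n+1}$ strictly between two consecutive zeros of $L_n$). The $2^{n-1}$ zeros of $L_n$ create $2^{n-1}-1$ bounded gaps, so precisely $2^{n-1}-1$ zeros of $L_{n+1}$ lie in them; together with the single zero below the smallest $L_n$-zero produced in (i), this accounts for $2^{n-1}$ zeros of $L_{n+1}$, necessarily the $2^{n-1}$ smallest, i.e. exactly the $1$-family. Matching these to the gaps in increasing order (again by monotonicity of $t\mapsto\sqrt{2-t}$ applied to the decreasing sequence $\sqrt{\omega(g_{n-1,j})}$) gives (ii), with $\sqrt{\omega(1\,g_{n-1,j})}$ falling into $(\sqrt{\omega(g_{n-1,k})},\sqrt{\omega(g_{n-1,k-1})})$ for the appropriate $k$. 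The $0$-family, being the remaining $2^{n}-2^{n-1}=2^{n-1}$ zeros and all larger than any $1$-family zero, then has nowhere to lie but to the right of the largest zero $\sqrt{\omega(g_{n-1,1})}$, which is (iii); this reproduces Figure~\ref{fig:zeriL}.

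I expect the load-bearing step to be the interlacing input rather than the algebra: the ``one and only one zero per gap'' statement is exactly what converts the Gray-code ordering of Theorem~\ref{theo:graycode} into the geometric picture. I would want to check its applicability carefully here, since $L_n$ and $L_{n+1}$ sit at degrees $2^{n}$ and $2^{n+1}$ in the orthogonal family of Proposition~\ref{cor5bis}, so they are consecutive in the sequence $\{L_m\}$ but not consecutive in degree; the gap count should therefore be read off from the explicit angles $\frac{(2k-1)\pi}{2^{n+1}}$ versus $\frac{(2k-1)\pi}{2^{n+2}}$ rather than quoted verbatim from the generic orthogonal-polynomial interlacing theorem. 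Once the per-gap count is secured, (ii) and (iii) are pure bookkeeping and (i) is the short trigonometric inequality above.
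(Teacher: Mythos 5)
Your closing caveat---that the ``one and only one zero per gap'' input must be checked against the explicit angles, because $L_n$ and $L_{n+1}$ are consecutive in the family $\{L_m\}$ but not consecutive in degree---is exactly where the argument breaks, and carrying out that check destroys the counting step on which your (ii) and (iii) rest. The positive zeros of $L_n$ are $2\cos\frac{(2k-1)\pi}{2^{n+1}}$ and those of $L_{n+1}$ are $2\cos\frac{(2j-1)\pi}{2^{n+2}}$; a consecutive pair of zeros of $L_n$ corresponds to numerators $4k-2$ and $4k+2$ over $2^{n+2}$, and the odd integers strictly between these are $4k-1$ and $4k+1$, so each bounded gap contains exactly \emph{two} zeros of $L_{n+1}$, not one. (The classical interlacing theorem gives ``exactly one'' only for orthogonal polynomials of consecutive degrees; for non-consecutive members it gives only ``at least one''.) With two per gap, at least $2(2^{n-1}-1)+1=2^{n}-1$ zeros of $L_{n+1}$ lie below the largest zero of $L_n$, while the $1$-family has only $2^{n-1}$ members; hence at least $2^{n-1}-1$ members of the $0$-family also lie there, and your bookkeeping for (ii) and (iii) collapses.

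In fact parts (ii) and (iii) of the statement are false, as the paper's own Table \ref{tab:1} already shows for $n=3$: the positive zeros of $L_3$ are approximately $0.39<1.11<1.66<1.96$, and among the zeros of $L_4$ both $0.58$ and $0.94$ (strings $101$ and $111$) fall in the single gap $(0.39,\,1.11)$, while the $0$-family zeros $1.54$, $1.76$, $1.91$ (strings $010$, $011$, $001$) all lie to the \emph{left} of the largest zero $1.96$ of $L_3$. The true picture is one new zero below all old ones, two in each gap, and one above all old ones---not the configuration of Figure \ref{fig:zeriL}. Only part (i) survives, and your trigonometric proof of it ($2-2\cos\theta<4\sin^{2}\theta$ with $\theta=\pi/2^{n+1}$, i.e.\ $2\sin(\theta/2)<2\sin\theta$) is correct and cleaner than the paper's radical-squaring computation. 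You should not feel singled out: the paper's own proof invokes the same unjustified ``one and only one zero of $L_{n+1}$ between consecutive zeros of $L_{n}$'' and checks only the two boundary inequalities, so it has the identical defect; but the honest conclusion is that the counting step fails and the theorem as stated needs to be corrected, not that a proof along these lines can be completed.
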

\begin{proof}
For the proof we first need to dispose on the real axis the $2^{n-1}$ zeros of $L_{n}$; in the interval $J=(a,b)$, where $a$ and $b$ are the first and the last zeros:
\begin{equation}
J=\Bigl(\sqrt{\omega(g_{n-1,2^{n-1}})},\ \sqrt{\omega(g_{n-1,1})}\Bigr)
\end{equation}
we can identify $2^{n-1}-1$ subintervals
\begin{equation}
J_{k,k-1}=\Bigl(\sqrt{\omega(g_{n-1,k})},\ \sqrt{\omega(g_{n-1,k-1})}\Bigr) \ \ \ k=2, 4, 8, ... , 2^{n-1}
\end{equation}
whose endpoints are consecutive zeros of $L_{n}$.
Since $L_{n}$ is a \emph{Tchebycheff} polynomial, and therefore it is an orthogonal polynomial, it follows that between two consecutive zeros of $L_{n}(x)$ there exists one and only one zero of $L_{n+1}(x)$. Therefore
in each interval $J_{k, k-1}$ we find only one zero of $L_{n+1}(x)$. Let us understand how they are distributed. Let us start with the first $2^{n-1}$ zeros of $L_{n+1}(x)$, i.e.:
\begin{equation}
\sqrt{\omega(1,g_{n-1,1})}<\ ...\ <\sqrt{\omega(1,g_{n-1,2^{n-1}})}
\end{equation}
The statements {\bf i)} and {\bf ii)} are true if we show that
\begin{equation}
\label{eq:primadis}
\sqrt{\omega(1,g_{n-1,1})}<\sqrt{\omega(g_{n-1,2^{n-1}})}
\end{equation}
and
\begin{equation}
\label{eq:secondaadis}
\sqrt{\omega(1,g_{n-1,2})}>\sqrt{\omega(g_{n-1,2^{n-1}})}
\end{equation}
In fact, let us recall that
\begin{align}
&\sqrt{\omega(1,g_{n-1,1})} \ \ {\rm is \ related \ to \ the \ sequence} \ \ 1\underbrace{0\ ...\ 0}_{n-1} \notag \\
&\sqrt{\omega(g_{n-1,2^{n-1}})} \ \ {\rm is \ related \ to \ the \ sequence} \ \ 1\underbrace{0\ ...\ 0}_{n-2} \notag \\
&\sqrt{\omega(1,g_{n-1,2})}\ \ {\rm is \ related \ to \ the \ sequence} \ \ 1\underbrace{0\ ...\ 0}_{n-2}1
\end{align}
From the first two relations, we can show (\ref{eq:primadis}):
\begin{align}
&\sqrt{\omega(1,g_{n-1,1})}<\sqrt{\omega(g_{n-1,2^{n-1}})} \ \ \Leftrightarrow \sqrt{2-\sqrt{2\underbrace{+\sqrt{2+\sqrt{2+...+\sqrt{2}}}}_{n-1}}}< \notag \\
&<\sqrt{2-\sqrt{2\underbrace{+\sqrt{2+\sqrt{2+...+\sqrt{2}}}}_{n-2}}} \Leftrightarrow \sqrt{2\underbrace{+\sqrt{2+\sqrt{2+...+\sqrt{2}}}}_{n-1}}> \notag \\
&>\sqrt{2\underbrace{+\sqrt{2+\sqrt{2+...+\sqrt{2}}}}_{n-2}}
\end{align}
noting that
\begin{equation}
\sqrt{2\underbrace{+\sqrt{2+\sqrt{2+...+\sqrt{2}}}}_{n-1}}=\sqrt{2+\sqrt{2\underbrace{+\sqrt{2+...+\sqrt{2}}}_{n-2}}}
\end{equation}
is greater than
\begin{equation}
\sqrt{2\underbrace{+\sqrt{2+\sqrt{2+...+\sqrt{2}}}}_{n-2}} \ .
\end{equation}
Let us prove (\ref{eq:secondaadis}) by the same reasoning used previously.
\begin{align}
&\sqrt{\omega(1,g_{n-1,2})}>\sqrt{\omega(g_{n-1,2^{n-1}})} \ \ \Leftrightarrow \sqrt{2-\sqrt{2\underbrace{+\sqrt{2+\sqrt{2+...-\sqrt{2}}}}_{n-1}}}> \notag \\
&>\sqrt{2-\sqrt{2\underbrace{+\sqrt{2+\sqrt{2+...+\sqrt{2}}}}_{n-2}}} \ \ \Leftrightarrow \sqrt{2\underbrace{+\sqrt{2+\sqrt{2+...-\sqrt{2}}}}_{n-1}}< \notag \\
&<\sqrt{2\underbrace{+\sqrt{2+\sqrt{2+...+\sqrt{2}}}}_{n-2}}
\end{align}
By squaring iteratively the $n-2$ radicals and simplifying, we obtain $-\sqrt{2}<0$, which is true.

{\bf iii)} follows immediately noting that
\begin{equation}
\omega(g_{n-1,1}) < 2  \quad ; \quad \omega(0, g_{n-1, 2^{n-1}}) = 2 + \sqrt{\omega(g_{n-1, 2^{n-1}})} > 2 \ .
\end{equation}
The three points of the thesis are proven.
\end{proof}
\begin{remark}
In the previous work \cite{Vel}, the considerations made on the map $L_n$ were extended to an entire class of maps, obtained through the iterated formula $M^a_{n}=2a \left(M^a_{n-1}\right)^{2}-\frac{1}{a} \ , \ a >0$, with $M^a_0(x) = x$. At each iteration the zeros of the map $M^a_n (n \geq 1)$ have the form
$$\pm \frac{1}{2a}\cdot \sqrt{2\pm\sqrt{2\pm\sqrt{2\pm\sqrt{2\pm...\pm\sqrt{2}}}}}$$
and it is clear that the results obtained in this paper are also valid for polynomials obtained through the iterated formula on $M^a_{n}$.
\end{remark}

\section{Conclusions and perspectives.}
\label{sec:conc}

In this paper we studied the distribution of the zeros of $L_n$, that can be expressed in terms of nested radicals of $2$; it allow us to give an ordering for nested square roots of 2 expressed by (\ref{eq:prop2a}) thanks to a binary code employed in Informatics (the Gray code). In further papers in preparation we are trying to apply the results introduced here, in order to obtain formulas for the approximation of $\pi$.

Moreover, in future developments, it would be interesting to determine and study zeros of different classes of Lucas-Lehmer polynomials, for example modifying suitably the first term of the sequence.

%\begin{acknowledgements}
%If you'd like to thank anyone, place your comments here
%and remove the percent signs.
%\end{acknowledgements}

% BibTeX users please use one of
%\bibliographystyle{spbasic}      % basic style, author-year citations
%\bibliographystyle{spmpsci}      % mathematics and physical sciences
%\bibliographystyle{spphys}       % APS-like style for physics
%\bibliography{}   % name your BibTeX data base

% Non-BibTeX users please use

\end{document}